\DeclareMathOperator{\A}{A}
\DeclareMathOperator{\D}{D}
\DeclareMathOperator{\R}{R}
\DeclareMathOperator{\At}{At}
\DeclareMathOperator{\id}{1'}
\DeclareMathOperator{\down}{\downarrow}
\newcommand{\compo}{\mathbin{;}}
\newcommand{\compl}[1]{\overline{#1}}
\newcommand{\bmeet}{\wedge}
\newcommand{\bjoin}{\vee}
\newcommand{\meet}{\bigwedge}
\newcommand{\join}{\bigvee}
\newcommand{\stcomp}[1]{{#1}^{\mathsf{c}}}
\newcommand{\defn}[1]{\textbf{#1}}
\newcommand{\algebra}[1]{\mathfrak{#1}}
\newcommand{\powerset}{\raisebox{0.45ex}{$\wp$}}
\theoremstyle{plain}
\newtheorem{theorem}{Theorem}[section]
\newtheorem{proposition}[theorem]{Proposition}
\newtheorem{lemma}[theorem]{Lemma}
\newtheorem{corollary}[theorem]{Corollary}
\theoremstyle{definition}
\newtheorem{definition}[theorem]{Definition}
\newtheorem{example}[theorem]{Example}
\newtheorem{remark}[theorem]{Remark}
\title[Complete Representation by Partial Functions]{Complete Representation by Partial Functions for Composition, Intersection and Antidomain}
\author{Brett McLean}
\address{Department of Computer Science, University College London, Gower Street, London WC1E 6BT}
\email{b.mclean@cs.ucl.ac.uk}
\thanks{The author would like to thank his PhD supervisor Robin Hirsch for many helpful discussions.}
\begin{document}

\begin{abstract}
For representation by partial functions in the signature with intersection, composition and antidomain, we show that a representation is meet complete if and only if it is join complete. We show that a representation is complete if and only if it is atomic, but that not all atomic representable algebras are completely representable. We show that the class of completely representable algebras is not axiomatisable by any existential-universal-existential first-order theory. By giving an explicit representation, we show that the completely representable algebras form a basic elementary class, axiomatisable by a universal-existential-universal sentence.

\end{abstract}

\maketitle

\section{Introduction}

Whenever we have a concrete class of algebras whose operations are set-\linebreak theoretically defined, we have a notion of a representation: an isomorphism from an abstract algebra to a concrete algebra. Then the representation class\textemdash the class of representable algebras\textemdash becomes an object of interest itself.

One possibility is for the concrete algebras to be algebras of partial functions, and for this scenario various signatures have been considered. Often, the representation classes have turned out to be finitely-axiomatisable varieties or quasi-varieties \cite{schein, 1018.20057, 1182.20058, DBLP:journals/ijac/JacksonS11}.

Extra conditions we can impose on a representation are to require that it be meet complete or to require that it be join complete. A representation is meet complete if it turns any existing infima into intersections and join complete if it turns any existing suprema into unions. Hence we can define meet-complete representation classes and join-complete representation classes. In many important cases these two classes coincide. Bounded distributive lattices represented as rings of sets is an example where they do not \cite{egrot}.

In \cite{journals/jsyml/HirschH97a}, Hirsch and Hodkinson showed that when the representation class is elementary, the complete representation class may (as is the case for Boolean algebras represented as fields of sets) or may not (relation algebras by binary relations) also be elementary.

In this paper we investigate complete representation by partial functions for the signature $(\compo, \bmeet, \A)$ of composition, intersection and antidomain. In \Cref{rep} we see that for this particular signature the algebras behave in many ways like Boolean algebras. We show that, as one consequence of this similarity to Boolean algebras, a representation by partial functions is meet complete if and only if it is join complete.

In \Cref{atomicity} we show that a representation is complete if and only if it is atomic. We use the requirement that completely representable algebras be atomic to prove that the class of completely representable algebras is not closed under subalgebras, directed unions or homomorphic images and is not axiomatisable by any existential-universal-existential first-order theory.

In \Cref{dist} we investigate the validity of various distributive laws with respect to the classes of representable and completely representable $(\compo, \bmeet, \A)$-algebras. This enables us to give an example of an algebra that is representable and atomic, but not completely representable.

In \Cref{arep} we present an explicit representation, which we use, in \Cref{axiom}, to prove our main result: the class of completely representable algebras is a basic elementary class, axiomatisable by a universal-existential-universal first-order sentence.

\section{Representations and Complete Representations}\label{rep}

In this section we give preliminary definitions and then proceed to show that for the signature $(\compo, \bmeet, \A)$, a representation by partial functions is meet complete if and only if it is join complete.

Given an algebra $\algebra{A}$, when we write $a \in \algebra{A}$ or say that $a$ is an element of $\algebra{A}$, we mean that $a$ is an element of the domain of $\algebra{A}$. Similarly for the notation $S \subseteq \algebra{A}$ or saying that $S$ is a subset of $\algebra{A}$. We follow the convention that algebras are always nonempty. If $S$ is a subset of the domain of a map $\theta$ then $\theta[S]$ denotes the set $\{\theta(s) \mid s \in S\}$. If $S_1$ and $S_2$ are subsets of the domain of a binary operation $*$ then $S_1 * S_2$ denotes the set $\{s_1 * s_2 \mid s_1 \in S_1 \text{ and } s_2 \in S_2\}$.

\begin{definition}\label{first}
Let $\sigma$ be an algebraic signature whose symbols are a subset of $\{\compo, \bmeet, 0, \id, \D, \R, \A\}$. An \defn{algebra of partial functions} of the signature $\sigma$ is an algebra of the signature $\sigma$ whose elements are partial functions and with operations given by the set-theoretic operations on those partial functions described in the following.

Let $X$ be the union of the domains and ranges of all the partial functions. We call $X$ the \defn{base}. In an algebra of partial functions
\begin{itemize}
\item
the binary operation $\compo$ is composition of partial functions:
\[f \compo g = \{(x,z) \in X^2 \mid \exists y \in X : (x, y) \in f\text{ and }(y, z) \in g\}\text{,}\]
\item
the binary operation $\wedge$ is intersection:
\[f \bmeet g = \{(x,y) \in X^2 \mid (x, y) \in f\text{ and }(x, y) \in g\}\text{,}\]
\item
the constant $0$ is the nowhere-defined function:
\[0 = \emptyset\text{,}\]
\item
the constant $\id$ is the identity function on $X$:
\[\id = \{(x, x) \in X^2 \}\text{,}\]
\item
the unary operation $\D$ is the operation of taking the diagonal of the domain of a function:
\[\D(f) = \{(x, x) \in X^2 \mid \exists y \in X : (x, y) \in f\}\text{,}\]
\item
the unary operation $\R$ is the operation of taking the diagonal of the range of a function:
\[\R(f) = \{(y, y) \in X^2 \mid \exists x \in X : (x, y) \in f\}\text{,}\]
\item
the unary operation $\A$ is the operation of taking the diagonal of the antidomain of a function\textemdash those points of $X$ where the function is not defined:
\[\A(f) = \{(x, x) \in X^2 \mid \cancel{\exists} y \in X : (x, y) \in f\}\text{.}\]
\end{itemize}
\end{definition}

The list of operations in \Cref{first} does not exhaust those that have been considered for partial functions, but does include the most commonly appearing operations.

\begin{definition}
Let $\algebra{A}$ be an algebra of one of the signatures specified by \Cref{first}. A \defn{representation of $\algebra{A}$ by partial functions} is an isomorphism from $\algebra{A}$ to an algebra of partial functions of the same signature. If $\algebra{A}$ has a representation then we say it is \defn{representable}.
\end{definition}

\begin{theorem}[Jackson and Stokes \cite{DBLP:journals/ijac/JacksonS11}]\label{thm:jackson-stokes}
The class of $(\compo, \bmeet, \A)$-algebras representable by partial functions is a finitely-based variety.
\end{theorem}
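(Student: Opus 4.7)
The plan is to follow the standard two-step pattern for showing that a concrete class of algebras forms a finitely based variety: (i) write down a finite list $\Sigma$ of equations valid in every algebra of partial functions, and (ii) prove that every $(\compo, \bmeet, \A)$-algebra satisfying $\Sigma$ is representable. The class of models of $\Sigma$ is then automatically a finitely based variety, and by (i) and (ii) it coincides with the representation class.

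For (i), natural candidates include associativity of $\compo$, the semilattice axioms for $\bmeet$, idempotence and commutativity of antidomain elements (for instance $\A(a) \compo \A(b) = \A(b) \compo \A(a)$ and $\A(a) \compo \A(a) = \A(a)$), the triple-negation law $\A\A\A(a) = \A(a)$, the unit law $\A\A(a) \compo a = a$, left-distribution $(a \bmeet b) \compo c = (a \compo c) \bmeet (b \compo c)$, and disjointness axioms expressing that $\A(a) \compo a$ is a common ``zero'' (statable equationally without a zero constant by equating it to an analogous term built from another variable). Verifying each such equation against \Cref{first} is a routine set-theoretic check.

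For (ii), the key structural observation is that the elements of the form $\A(a)$ form a Boolean algebra $\mathsf{B}(\algebra{A})$, with $\bmeet$ as meet and $\A$ as complementation; this follows from the equations above. I would build the base $X$ of the representing algebra from ``points'', each point being an ultrafilter $F$ of $\mathsf{B}(\algebra{A})$. For each $a \in \algebra{A}$ define $\theta(a) \subseteq X^2$ by declaring $(F, G) \in \theta(a)$ iff $\A\A(a) \in F$ and $G$ is the ultrafilter obtained by transporting $F$ along $a$, in the sense that $G$ contains an element $\A(b)$ precisely when $F$ contains $\A(a \compo b)$. Preservation of $\bmeet$ and of $\A$ then follows quickly from the definitions, using the left-distributive equations and the Boolean structure of $\mathsf{B}(\algebra{A})$.

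The main obstacles are injectivity of $\theta$ and preservation of $\compo$. Injectivity requires, for any distinct $a, b \in \algebra{A}$, constructing a point witnessing the difference, which amounts to a filter-extension argument analogous to the proof of Stone's theorem but carried out inside $\algebra{A}$ using $\Sigma$. Preservation of composition requires that whenever $(F, H) \in \theta(a \compo b)$ there exists an intermediate $G$ with $(F, G) \in \theta(a)$ and $(G, H) \in \theta(b)$; this is another filter-extension argument, requiring one to show that the candidate intermediate ultrafilter (forced by the relations $F$ must satisfy) really is a proper ultrafilter of $\mathsf{B}(\algebra{A})$. I would expect this composition step to be the technical heart of the proof and the place where the strongest use of the left-distributive and antidomain axioms is made. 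Once injectivity and all three preservation clauses are established, $\theta$ is the desired representation, and so the representation class is the variety axiomatised by $\Sigma$.
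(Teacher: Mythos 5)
The paper does not prove this statement\textemdash it is quoted from Jackson and Stokes, with only the remark that a finite equational axiomatisation is given implicitly there\textemdash so your proposal has to be judged on its own merits, and as written it has a concrete problem in each half of your plan. In step (i), your list of candidate axioms includes $(a \bmeet b) \compo c = (a \compo c) \bmeet (b \compo c)$. That equation is not valid in algebras of partial functions, so your $\Sigma$ would be unsound and its model class could not be the representation class. The paper itself exhibits the counterexample: in the algebra of \Cref{picture} we have $(f_1 \bmeet f_2) \compo g = 0 \compo g = 0$ while $(f_1 \compo g) \bmeet (f_2 \compo g) = h \neq 0$. The law that does hold is distribution of composition from the left, $c \compo (a \bmeet b) = (c \compo a) \bmeet (c \compo b)$ (\Cref{dist-laws}), and only that one may go into $\Sigma$.

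The more serious gap is in step (ii): the base you choose, the ultrafilters of the Boolean algebra $\mathsf{B}(\algebra{A})$ of antidomain elements, is too coarse, and no filter-extension lemma can repair this, because the defect lies in the choice of points rather than in the existence of suitable ultrafilters. Concretely, consider the algebra of partial functions on the base $\{x, y_1, y_2\}$ whose elements are $0$, $f = \{(x,y_1)\}$, $g = \{(x,y_2)\}$ and the restrictions of the identity to $\{x\}$, to $\{y_1, y_2\}$ and to the whole base; this six-element set is closed under $\compo$, $\bmeet$ and $\A$, so it lies in the representation class and satisfies any sound $\Sigma$. In it, $\A\A(f) = \A\A(g)$ and $\A(f \compo b) = \A(g \compo b)$ for every element $b$, so your membership condition yields $\theta(f) = \theta(g)$ although $f \neq g$: injectivity fails. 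Meet preservation fails as well, since $f \bmeet g = 0$ but $\theta(f) \cap \theta(g) = \theta(f) \neq \emptyset$. The underlying reason is that ultrafilters of $\mathsf{B}(\algebra{A})$ record only domain information, and two distinct functions with the same domain need not be separated by it: in the concrete algebra the separating points are $y_1$ and $y_2$, which induce the same ultrafilter of $\mathsf{B}(\algebra{A})$. A correct proof must build its base from richer data\textemdash for instance filters of the whole meet-semilattice $(\algebra{A}, \bmeet)$, in the style of Schein-type representations, which is essentially what Jackson and Stokes do\textemdash and the composition and injectivity steps you defer are precisely where that extra structure is indispensable.
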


In fact in \cite{DBLP:journals/ijac/JacksonS11} a finite equational axiomatisation of the representation class is given, implicitly. So there exist known examples of such axiomatisations.

If an algebra of the signature $(\compo, \bmeet, \A)$ is representable by partial functions, then it forms a $\bmeet$-semilattice. Whenever we treat such an algebra as a poset, we are using the order induced by this semilattice.

The next two definitions apply to any situation where the concept of a representation has been defined. So in particular, these definitions apply to representations as fields of sets as well as to representations by partial functions.

\begin{definition}\label{def:meet}
A representation $\theta$ of a poset $\algebra{P}$ over the base $X$ is \defn{meet complete} if, for every nonempty subset $S$ of $\algebra{P}$, if $\meet S$ exists, then
\[\theta(\meet S) = \bigcap \theta[S]\text{.}\]
\end{definition}

\begin{definition}\label{def:join}
A representation $\theta$ of a poset $\algebra{P}$ over the base $X$ is \defn{join complete} if, for every subset $S$ of $\algebra{P}$, if $\join S$ exists, then
\[\theta(\join S) = \bigcup \theta[S].\]
\end{definition}

Note that $S$ is required to be nonempty in \Cref{def:meet}, but not in \Cref{def:join}. For representations of Boolean algebras as fields of sets, the notions of meet complete and join complete are equivalent, so in this case we may simply use the adjective \defn{complete}.

Note that if $\algebra{A}$ is an algebra of the signature $(\compo, \bmeet, \A)$ and $\algebra{A}$ is representable by partial functions, then $\algebra{A}$ must have a least element, $0$, given by $\A(a) \compo a$ for any $a \in \algebra{A}$ and any representation must represent $0$ with the empty set. Similarly $\D \coloneqq \A^2$ must be represented by the set-theoretic domain operation.

The following lemma demonstrates the utility of the particular signature $(\compo, \bmeet, \A)$. The similarity of representable $(\compo, \bmeet, \A)$-algebras to Boolean algebras allows results from the theory of Boolean algebras to be imported into the setting of $(\compo, \bmeet, \A)$-algebras.

\begin{lemma}\label{lemma:boolean}
Let $\algebra{A}$ be an algebra of the signature $(\compo, \bmeet, \A)$. If $\algebra{A}$ is representable by partial functions, then for every $a \in \algebra{A}$, the set $\down a$, with least element $0$, greatest element $a$, meet given by $\bmeet$ and complementation given by $\compl{b} \coloneqq \A(b) \compo a$ is a Boolean algebra. Any representation $\theta$ of $\algebra{A}$ by partial functions restricts to a representation of $\down a$ as a field of sets over $\theta(a)$. If $\theta$ is a meet-complete or join-complete representation, then the representation of $\down a$ is complete.
\end{lemma}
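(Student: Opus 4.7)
The plan is to let the representation $\theta$ do the work: I will show that $\theta$ restricted to $\down a$ is an isomorphism onto a field of subsets of $\theta(a)$, and then transfer the Boolean algebra structure back to $\down a$. First, $b \leq a$ iff $b \bmeet a = b$, which gives $\theta(b) \subseteq \theta(a)$; hence $\theta$ maps $\down a$ into $\powerset(\theta(a))$, injectively because $\theta$ is. It trivially sends $0$ to $\emptyset$, $a$ to $\theta(a)$, and $\bmeet$ to $\cap$.

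The key computation is $\theta(\A(b) \compo a) = \theta(a) \setminus \theta(b)$ whenever $b \leq a$. Writing $f = \theta(a)$ and $g = \theta(b) \subseteq f$, we have $\theta(\A(b)) = \{(x,x) \mid x \notin \mathrm{dom}(g)\}$, and unwinding composition yields $\theta(\A(b) \compo a) = \{(x, f(x)) \mid x \in \mathrm{dom}(f) \setminus \mathrm{dom}(g)\}$; since $g \subseteq f$, every such pair lies in $f \setminus g$ and vice versa, giving $f \setminus g$. Consequently $\theta[\down a]$ contains $\emptyset$ and $\theta(a)$ and is closed under intersection and relative complementation in $\theta(a)$, so it is a field of sets over $\theta(a)$, hence a Boolean algebra under these set-theoretic operations. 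Since $\theta \restriction \down a$ is a bijection preserving $\bmeet$, $\compl{\cdot}$, $0$, and $a$ to their set-theoretic counterparts, $\down a$ inherits this Boolean algebra structure with the stated operations, and $\theta \restriction \down a$ is a representation of $\down a$ as a field of sets over $\theta(a)$.

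For the completeness clause I will show meets and joins in $\down a$ coincide, when they exist, with those in $\algebra{A}$. For a nonempty $S \subseteq \down a$, any lower bound of $S$ in $\algebra{A}$ is below some $s \in S \subseteq \down a$, so itself lies in $\down a$; hence the two meets agree. For joins, $\bigvee_{\algebra{A}} S$, when it exists, is bounded above by $a$ and so lies in $\down a$; and conversely if $u \in \algebra{A}$ is any upper bound of $S$ then $u \bmeet a \leq u$ is an upper bound in $\down a$, forcing $\bigvee_{\down a} S \leq u$. Thus meet- or join-completeness of $\theta$ on $\algebra{A}$ transfers to $\theta \restriction \down a$, and since for fields of sets the two notions coincide, either hypothesis yields full completeness. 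The only nontrivial step in the whole argument is the complement computation in the second paragraph; the remainder is routine transfer of structure.
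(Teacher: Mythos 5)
Your proposal is correct and follows essentially the same route as the paper's proof: restrict $\theta$ to $\down a$, check that meet and the relative complement $\A(b)\compo a$ are represented set-theoretically (you just unwind the computation $\A(\theta(b))\compo\theta(a)=\theta(a)\setminus\theta(b)$ more explicitly), and then show that meets and joins of subsets of $\down a$ agree in $\down a$ and in $\algebra{A}$ so that completeness transfers. The appeal to the equivalence of meet- and join-completeness for field-of-sets representations is also exactly what the paper relies on.
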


\begin{proof}
If $\theta$ is a representation of $\algebra{A}$ by partial functions, then $b \leq a \implies \theta(b) \subseteq \theta(a)$, so $\theta$ does indeed map elements of $\down a$ to subsets of $\theta(a)$. We have $b, c \in \down a \implies b \bmeet c \in \down a$ and $\theta(b \bmeet c) = \theta(b) \cap \theta(c)$ is always true by the definition of functional representability. For $b \leq a$
\[\theta(\compl{b}) = \theta(\A(b) \compo a) = \A(\theta(b)) \compo \theta(a) = \theta(a) \setminus \theta(b),\]
so $\compl{b} \in \down a$ and $\theta(\compl{b}) = \stcomp{\theta(b)}$, where the set complement is taken relative to $\theta(a)$. Hence the restriction of $\theta$ to $\down a$ is a representation of $(\down a, 0, a, \bmeet, \compl{\phantom{c}})$ as a field of sets over $\theta(a)$ (from which it follows that $\down a$ is a Boolean algebra).

Suppose $\theta$ is meet complete. If $S$ is a nonempty subset of $\down a$, then all lower bounds for $S$ in $\algebra{A}$ are also in $\down a$. Hence if $\meet_{\down a} S$ exists then it equals $\meet_{\algebra{A}} S$, and so $\theta(\meet_{\down a} S) = \bigcap \theta[S]$. So the representation of $\down a$ is complete.

Suppose that $\theta$ is join complete, $S \subseteq \down a$ and $\join_{\down a} S$ exists. If $c \in \algebra{A}$ and $c$ is an upper bound for $S$, then $c \geq c \bmeet a \geq \join_{\down a} S$. Hence $\join_{\down a} S = \join_{\algebra{A}} S$, giving $\theta(\join_{\down a} S) = \theta(\join_{\algebra{A}} S) = \bigcup \theta[S]$. So the representation of $\down a$ is complete.
\end{proof}

\begin{corollary}\label{cor:1}
Let $\algebra{A}$ be an algebra of the signature $(\compo, \bmeet, \A)$ and $\theta$ be a representation of $\algebra{A}$ by partial functions. If $\theta$ is meet complete, then it is join complete.
\end{corollary}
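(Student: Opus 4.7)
The plan is to reduce the claim to the Boolean algebra case using \Cref{lemma:boolean}, exploiting the fact (noted in the paper just after \Cref{def:join}) that for representations of Boolean algebras as fields of sets, meet completeness and join completeness are equivalent.

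Suppose $\theta$ is meet complete, let $S \subseteq \algebra{A}$ and suppose $\join S$ exists. I would dispose of the empty case first: $\join \emptyset = 0$, which any representation must send to $\emptyset = \bigcup \theta[\emptyset]$, so $\theta(\join \emptyset) = \bigcup \theta[\emptyset]$ trivially.

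For the main case, set $a = \join S$, so $S \subseteq \down a$. I would first verify that $a$ is also the join of $S$ computed within $\down a$: every upper bound $c$ of $S$ in $\down a$ is an upper bound for $S$ in $\algebra{A}$, hence $c \geq a$, so $\join_{\down a} S = a$. Now \Cref{lemma:boolean} tells us that the restriction $\theta\restriction_{\down a}$ is a representation of $\down a$ as a field of sets over $\theta(a)$, and because $\theta$ is meet complete, this restricted representation of the Boolean algebra $\down a$ is complete. Since meet complete and join complete coincide for representations of Boolean algebras as fields of sets, the restricted representation is also join complete, giving $\theta(a) = \theta(\join_{\down a} S) = \bigcup \theta[S]$, which is exactly the conclusion $\theta(\join S) = \bigcup \theta[S]$.

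There is no real obstacle: the essential work has already been done in \Cref{lemma:boolean}, which packaged the Boolean-like behaviour of principal downsets; the only thing to check carefully is that suprema computed in $\down a$ agree with suprema computed in $\algebra{A}$ whenever the upper bound $a$ itself is the sup, so that the completeness of the Boolean restriction transfers back to a statement about $\theta$ on $\algebra{A}$.
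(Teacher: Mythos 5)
Your proposal is correct and follows essentially the same route as the paper: set $a = \join S$, observe that the join computed in $\down a$ agrees with the join in $\algebra{A}$, and apply \Cref{lemma:boolean} together with the coincidence of meet and join completeness for Boolean field-of-sets representations. Your extra checks (the empty-set case and the agreement of suprema) are just details the paper leaves implicit.
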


\begin{proof}
Suppose that $\theta$ is meet complete. Let $S$ be a subset of $\algebra{A}$ and suppose that $\join_{\algebra{A}} S$ exists. Let $a = \join_{\algebra{A}} S$. Then
\[\theta(\join_{\algebra{A}} S) = \theta(\join_{\down a} S) = \bigcup \theta[S]\text{.}\qedhere\]
\end{proof}

\begin{corollary}\label{cor:2}
Let $\algebra{A}$ be an algebra of the signature $(\compo, \bmeet, \A)$ and $\theta$ be a representation of $\algebra{A}$ by partial functions. If $\theta$ is join complete, then it is meet complete.
\end{corollary}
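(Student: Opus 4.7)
The plan is to mirror the strategy of \Cref{cor:1}, using \Cref{lemma:boolean} to transport the problem into a principal downset $\down a$ where we have a complete Boolean-algebra representation, and then using meet completeness of that Boolean representation (which follows from join completeness of $\theta$). The asymmetry between the join and meet cases is that the join $\join S$ itself provided a natural upper-bound element $a$ with $S \subseteq \down a$; for meets, no element of $\algebra{A}$ is automatically above every element of $S$, so a small preprocessing step is required.

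Concretely, suppose $\theta$ is join complete, let $S$ be a nonempty subset of $\algebra{A}$, and assume $\meet_{\algebra{A}} S$ exists. Pick any $a \in S$ (this is where nonemptiness of $S$ is essential, mirroring the role played in \Cref{def:meet}) and form the set $S' = \{s \bmeet a \mid s \in S\} \subseteq \down a$. I would first check that $\meet_{\algebra{A}} S' = \meet_{\algebra{A}} S$: any lower bound of $S$ in $\algebra{A}$ is a lower bound of $S'$ since $s \bmeet a \leq s$, and conversely any lower bound of $S'$ is below each $s \bmeet a \leq s$, so it is also a lower bound of $S$. Next, since any lower bound of $S'$ in $\algebra{A}$ is in particular below $a$ (hence in $\down a$), we also have $\meet_{\down a} S' = \meet_{\algebra{A}} S'$, just as in the proof of \Cref{lemma:boolean}.

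Now by \Cref{lemma:boolean}, the restriction of $\theta$ to $\down a$ is a complete representation of the Boolean algebra $\down a$ as a field of sets over $\theta(a)$; in particular it is meet complete. Therefore
\[\theta(\meet_{\algebra{A}} S) = \theta(\meet_{\down a} S') = \bigcap \theta[S'] = \bigcap_{s \in S}(\theta(s) \cap \theta(a)) = \theta(a) \cap \bigcap \theta[S].\]
Since $a \in S$, the last expression simplifies to $\bigcap \theta[S]$, finishing the argument.

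There is really no substantial obstacle: the proof is a cosmetic variant of \Cref{cor:1}, with the two mild points being (i) remembering to intersect every element of $S$ with a chosen $a \in S$ so that we land in $\down a$, and (ii) noting that nonemptiness of $S$ is used precisely to supply such an $a$, which is consistent with the fact that \Cref{def:meet} only demands the meet-completeness condition for nonempty $S$.
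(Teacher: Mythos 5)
Your proof is correct and is essentially the paper's own argument: the paper also picks $s \in S$, replaces $S$ by $S \bmeet \{s\}$ (your $S'$), identifies $\meet_{\algebra{A}} S$ with $\meet_{\down s}(S \bmeet \{s\})$, and applies the completeness of the restricted Boolean representation from \Cref{lemma:boolean}, with your version merely spelling out the intermediate verifications in more detail.
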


\begin{proof}
Suppose that $\theta$ is join complete. Let $S$ be a nonempty subset of $\algebra{A}$ and suppose that $\meet_{\algebra{A}} S$ exists. As $S$ is nonempty, we can find $s \in S$. Then
\[\theta(\meet_{\algebra{A}} S) = \theta(\meet_{\algebra{A}} (S \bmeet \{s\})) = \theta(\meet_{\down{s}} (S \bmeet \{s\})) = \bigcap \theta[S \bmeet \{s\}] = \bigcap \theta[S]\text{.}\qedhere\]
\end{proof}

Corollaries \ref{cor:1} and \ref{cor:2} tell us that, just as for representations of Boolean algebras, we can describe representations of $(\compo, \bmeet, \A)$-algebras by partial functions as \defn{complete}, without any risk of confusion about whether we mean meet complete or join complete.

\section{Atomicity}\label{atomicity}

We begin our investigation of the complete representation class by considering the property of being atomic, both for algebras and for representations.

\begin{definition}
Let $\algebra{P}$ be a poset with a least element, $0$. An \defn{atom} of $\algebra{P}$ is a minimal nonzero element of $\algebra{P}$. We say that $\algebra{P}$ is \defn{atomic} if every nonzero element is greater than or equal to an atom.
\end{definition}

If $\algebra{P}$ is a poset, then $\At(\algebra{P})$ denotes the set of atoms of $\algebra{P}$.

We noted in the proof of \Cref{lemma:boolean} that representations of $(\compo, \bmeet, \A)$-algebras necessarily represent the partial order by set inclusion. The following definition is meaningful for any notion of representation where this is the case.

\begin{definition}
Let $\algebra{P}$ be a poset with a least element and let $\theta$ be a representation of $\algebra{P}$. Then $\theta$ is \defn{atomic} if $x \in \theta(a)$ for some $a \in \algebra{P}$ implies $x \in \theta(b)$ for some atom $b$ of $\algebra{P}$.
\end{definition}

We will need the following theorem.

\begin{theorem}[Hirsch and Hodkinson \cite{journals/jsyml/HirschH97a}]\label{thm:hirsch-hodkinson} Let $\algebra{B}$ be a Boolean algebra. A representation of $\algebra{B}$ as a field of sets is atomic if and only if it is complete.
\end{theorem}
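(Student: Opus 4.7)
The plan is to handle the two directions separately, using throughout that a field-of-sets representation $\theta$ of $\algebra{B}$ over base $X$ has $\theta(\compl{c}) = X \setminus \theta(c)$ for every $c$. For ``atomic $\Rightarrow$ complete'' it suffices to prove atomic implies meet complete, since the Boolean-algebra analogue of \Cref{cor:1} then yields join completeness. Suppose $\theta$ is atomic and $\meet S = m$ exists for nonempty $S \subseteq \algebra{B}$. The inclusion $\theta(m) \subseteq \bigcap \theta[S]$ is immediate from order preservation. Conversely, if some $x \in \bigcap \theta[S] \setminus \theta(m)$, then $x \in \theta(\compl{m})$ and atomicity provides an atom $b$ with $x \in \theta(b)$. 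For each $s \in S$ the point $x$ lies in $\theta(b) \cap \theta(s) = \theta(b \bmeet s)$, forcing $b \bmeet s \neq 0$ and hence $b \leq s$ since $b$ is an atom. Thus $b \leq m$, so $x \in \theta(b) \subseteq \theta(m)$---a contradiction.

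For ``complete $\Rightarrow$ atomic'', fix $x \in \theta(a_0)$ and suppose, for contradiction, that no atom $b$ satisfies $x \in \theta(b)$. Consider
\[T = \{c \in \algebra{B} \mid c \leq a_0 \text{ and } x \notin \theta(c)\}\text{.}\]
The plan is to show $\join T = a_0$: then join completeness gives $\theta(a_0) = \bigcup \theta[T]$, contradicting $x \in \theta(a_0) \setminus \bigcup \theta[T]$. Any upper bound $t'$ of $T$ may be replaced by $t' \bmeet a_0$, so assume $t' \leq a_0$ and aim for $t' = a_0$. If $x \notin \theta(t')$ then $t' \in T$, and $b \coloneqq a_0 \bmeet \compl{t'} > 0$ contains $x$; a short case split on any proposed subelement $0 < c < b$ (on whether $x \in \theta(c)$, placing either $b \bmeet \compl{c}$ or $c$ itself into $T$, then using that every element of $T$ is $\leq t'$ while the candidate element is also $\leq \compl{t'}$) shows $b$ must be an atom, contradicting the standing assumption. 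If instead $x \in \theta(t')$, then for any $d \leq a_0$ the element $d \bmeet \compl{t'}$ lies in $T$ (as $x \notin \theta(\compl{t'})$), is $\leq t'$, and is $\leq \compl{t'}$, hence is $0$; so $d \leq t'$, and taking $d = a_0$ contradicts $t' < a_0$.

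The main obstacle is this second direction, since completeness of $\theta$ is only a conditional property---nothing in the hypothesis guarantees that arbitrary suprema in $\algebra{B}$ exist, so join completeness cannot be invoked against an arbitrary set. The trick is to single out one specific set, the ideal $T$ above, whose join is forced to equal $a_0$ using only the ``no atom contains $x$'' assumption, thereby making join completeness applicable.
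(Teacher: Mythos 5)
Your proof is correct; note that the paper does not prove this statement at all---it is imported verbatim from Hirsch and Hodkinson---so there is no in-paper argument to compare against, only the use the paper makes of it. Your ``atomic $\Rightarrow$ complete'' half is the standard argument (it is essentially the same computation the paper performs in \Cref{prop:at-com} for the partial-function analogue: an atom below one $s_0$ witnessing $x$ must meet every $s \in S$ nontrivially, hence lies below $\meet S$), and the reduction to meet completeness via the Boolean equivalence of meet- and join-completeness is legitimate since the paper records that equivalence explicitly. The ``complete $\Rightarrow$ atomic'' half is the genuinely nontrivial direction, and your handling of it is sound: the ideal $T = \{c \leq a_0 \mid x \notin \theta(c)\}$ does have join $a_0$ under the assumption that no atom contains $x$, because any upper bound $t' \leq a_0$ with $x \notin \theta(t')$ would force $a_0 \bmeet \compl{t'}$ to be an atom containing $x$ (your case split on a putative $0 < c < a_0 \bmeet \compl{t'}$ checks out, since in either case the relevant element lands in $T$, hence below $t'$, while also lying below $\compl{t'}$), and any upper bound with $x \in \theta(t')$ absorbs $d \bmeet \compl{t'}$ for every $d \leq a_0$ and so equals $a_0$; join completeness then gives the contradiction $x \in \theta(a_0) = \bigcup \theta[T]$. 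This is essentially the classical Hirsch--Hodkinson/Abian argument, localised below $a_0$ (one can equally run it with $a_0 = 1$ and $T = \{c \mid x \notin \theta(c)\}$); the localisation costs nothing and changes nothing essential. The only point worth making explicit in a final write-up is that you use $x \in \theta(a_0) \subseteq X$ to pass freely between $x \notin \theta(c)$ and $x \in \theta(\compl{c})$, which is exactly what the field-of-sets hypothesis provides.
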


\begin{proposition}\label{prop:at-com}
Let $\algebra{A}$ be an algebra of the signature $(\compo, \bmeet, \A)$ and $\theta$ be a representation of $\algebra{A}$ by partial functions. Then $\theta$ is atomic if and only if it is complete.
\end{proposition}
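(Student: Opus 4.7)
The plan is to reduce both directions to the Boolean algebra setting by exploiting \Cref{lemma:boolean}, which lets us restrict any representation $\theta$ to a representation of the principal downset $\down a$ as a field of sets over $\theta(a)$, and then to invoke \Cref{thm:hirsch-hodkinson} of Hirsch and Hodkinson. A small auxiliary observation used in both directions is that the atoms of $\down a$ are exactly the atoms of $\algebra{A}$ that happen to lie below $a$: if $b$ is an atom of $\algebra{A}$ with $b \leq a$ then clearly $b$ is a minimal nonzero element of $\down a$; conversely, if $b$ is minimal nonzero in $\down a$ and $0 < c \leq b$ in $\algebra{A}$, then $c \in \down a$ forces $c = b$.

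For the direction complete implies atomic, I would argue as follows. Suppose $x \in \theta(a)$. By \Cref{lemma:boolean}, the restriction of $\theta$ to $\down a$ is a complete representation of the Boolean algebra $\down a$ as a field of sets over $\theta(a)$. By \Cref{thm:hirsch-hodkinson} this restriction is atomic, so there is some atom $b$ of $\down a$ with $x \in \theta(b)$. By the observation above, $b$ is also an atom of $\algebra{A}$, as required.

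For the converse, suppose $\theta$ is atomic, let $S$ be a nonempty subset of $\algebra{A}$ with $\meet_{\algebra{A}} S$ existing, and pick any $s \in S$. I would first show that the restriction of $\theta$ to $\down s$ is atomic: given $x \in \theta(a)$ with $a \leq s$, atomicity of $\theta$ on $\algebra{A}$ gives an atom $b$ of $\algebra{A}$ with $x \in \theta(b)$, and then $x \in \theta(a) \cap \theta(b) = \theta(a \bmeet b)$ forces $a \bmeet b \neq 0$, so $b \leq a \leq s$, putting $b$ in $\down s$ (where it is an atom by the observation). Applying \Cref{thm:hirsch-hodkinson} to the Boolean algebra $\down s$ (which is a Boolean algebra by \Cref{lemma:boolean}) then shows the restriction of $\theta$ to $\down s$ is complete. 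Mimicking the computation in the proof of \Cref{cor:2}, namely
\[\theta(\meet_{\algebra{A}} S) = \theta(\meet_{\down s} (S \bmeet \{s\})) = \bigcap \theta[S \bmeet \{s\}] = \bigcap \theta[S],\]
we get that $\theta$ is meet complete, hence complete by \Cref{cor:1}.

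I do not expect any serious obstacle: once one has \Cref{lemma:boolean} and the Hirsch–Hodkinson theorem, the only real content is the book-keeping about atoms of $\algebra{A}$ versus atoms of $\down a$, and the standard trick of meeting $S$ against one of its own elements in order to sit inside a principal downset.
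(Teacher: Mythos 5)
Your proposal is correct. The direction ``complete implies atomic'' is exactly the paper's argument: restrict $\theta$ to a complete representation of the Boolean algebra $\down a$ via \Cref{lemma:boolean}, apply \Cref{thm:hirsch-hodkinson}, and observe that an atom of $\down a$ is an atom of $\algebra{A}$ (your two-way identification of $\At(\down a)$ with the atoms of $\algebra{A}$ below $a$ is a slight elaboration of what the paper leaves as ``clearly''). Where you genuinely diverge is the direction ``atomic implies complete'': the paper argues directly, chasing a pair $(x,y) \in \bigcap \theta[S]$ to an atom $a$ with $(x,y) \in \theta(a)$ and showing $a \bmeet s = a$ for every $s \in S$, hence $a \leq \meet S$ and $(x,y) \in \theta(\meet S)$; you instead first transfer atomicity of $\theta$ to the restricted representation of $\down s$ (via the same $a \bmeet b \neq 0$ trick the paper uses, just relocated), invoke \Cref{thm:hirsch-hodkinson} a second time to get completeness of that restriction, and then run the $\theta(\meet_{\algebra{A}} S) = \theta(\meet_{\down s}(S \bmeet \{s\})) = \bigcap \theta[S \bmeet \{s\}] = \bigcap \theta[S]$ computation from \Cref{cor:2}, finishing with \Cref{cor:1}. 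Both arguments are sound; yours is a more uniform reduction that uses the Hirsch--Hodkinson theorem in both directions and reuses the machinery of \Cref{lemma:boolean} and \Cref{cor:2}, at the cost of being slightly less self-contained, whereas the paper's forward direction is an elementary element-chasing argument that needs no Boolean-algebra input at all.
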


\begin{proof}
Suppose that $\theta$ is atomic, $S$ is a nonempty subset of $\algebra{A}$ and $\meet S$ exists. It is always true that $\theta(\meet S) \subseteq \bigcap \theta[S]$, regardless of whether or not $\theta$ is atomic. For the reverse inclusion, we have
\[\begin{array}{cll}
 & (x, y) \in \bigcap \theta[S]
\\ \implies & (x, y) \in \theta(s) & \text{for all }s \in S
\\ \implies & (x, y) \in \theta(a) & \text{for some atom }a\text{ such that }(\forall s \in S)\text{ } a \leq s
\\ \implies & (x, y) \in \theta(a) & \text{for some atom }a\text{ such that } a \leq \meet S
\\ \implies &  (x, y) \in \theta(\meet S)\text{.}
\end{array}\]
The third line follows from the second because, taking an $s_0 \in S$ and an atom $a$ below $s_0$ with $(x, y) \in \theta(a)$, we have $(x, y) \in \theta(a \bmeet s)$ for any $s \in S$. So for all $s \in S$, the element $a \bmeet s$ is nonzero, so equals $a$, by atomicity of $a$, giving $a \leq s$.

Conversely, suppose that $\theta$ is complete. Let $(x, y)$ be a pair contained in $\theta(a)$ for some $a \in \algebra{A}$. By \Cref{lemma:boolean}, the map $\theta$ restricts to a complete representation of $\down a$ as a field of sets. Hence, by \Cref{thm:hirsch-hodkinson}, $(x, y) \in \theta(b)$ for some atom $b$ of the Boolean algebra $\down a$. Since an atom of $\down a$ is clearly an atom of $\algebra{A}$, the representation $\theta$ is atomic.
\end{proof}

\begin{corollary}\label{cor:atomic}
Let $\algebra{A}$ be an algebra of the signature $(\compo, \bmeet, \A)$. If $\algebra{A}$ is completely representable by partial functions then $\algebra{A}$ is atomic.
\end{corollary}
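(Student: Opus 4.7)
The plan is to combine \Cref{prop:at-com} with the fact that a representation is injective. Suppose $\algebra{A}$ has a complete representation $\theta$. By \Cref{prop:at-com}, $\theta$ is also atomic. I must show that every nonzero $a \in \algebra{A}$ lies above some atom of $\algebra{A}$.

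Take any nonzero $a \in \algebra{A}$. Since $\theta$ is an isomorphism and $\theta(0) = \emptyset$, the set $\theta(a)$ is nonempty, so pick some $(x,y) \in \theta(a)$. Applying atomicity of $\theta$, there is an atom $b$ of $\algebra{A}$ with $(x,y) \in \theta(b)$. Then $(x,y) \in \theta(a) \cap \theta(b) = \theta(a \bmeet b)$, so $a \bmeet b \neq 0$. Since $b$ is an atom and $a \bmeet b \leq b$, we conclude $a \bmeet b = b$, that is $b \leq a$. Hence $a$ lies above the atom $b$, and $\algebra{A}$ is atomic.

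There is no real obstacle here; the corollary is a short consequence of \Cref{prop:at-com} together with the standard observation that $\theta(b) \subseteq \theta(a)$ implies $b \leq a$ under a representation by partial functions.
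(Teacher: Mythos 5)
Your proof is correct and follows essentially the same route as the paper: pick $(x,y) \in \theta(a)$, use \Cref{prop:at-com} to get an atom $b$ with $(x,y) \in \theta(b)$, and conclude $a \bmeet b \neq 0$, hence $b \leq a$. The only quibble is your closing remark—the step actually used is that $a \bmeet b \neq 0$ with $b$ an atom forces $b \leq a$, not the observation that $\theta(b) \subseteq \theta(a)$ implies $b \leq a$—but this does not affect the argument.
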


\begin{proof}
Let $a$ be a nonzero element of $\algebra{A}$. Let $\theta$ be any complete representation of $\algebra{A}$. Then $\emptyset = \theta(0) \neq \theta(a)$, so there exists $(x, y) \in \theta(a)$. By \Cref{prop:at-com}, the map $\theta$ is atomic, so $(x, y) \in \theta(b)$ for some atom $b$ in $\algebra{A}$. Then $(x, y) \in \theta(a \bmeet b)$, so $a \bmeet b > 0$, from which we may conclude that the atom $b$ satisfies $b \leq a$.
\end{proof}

So far we have exploited the Boolean algebras that are contained in any representable $(\compo, \bmeet, \A)$-algebra. But we can also travel in the opposite direction and interpret any Boolean algebra as an algebra of the signature $(\compo, \bmeet, \A)$, by using the Boolean meet for both the composition and meet operations and Boolean complement for antidomain. Again this enables us to easily prove results about $(\compo, \bmeet, \A)$-algebras using results about Boolean algebras.

We know by the following argument that a Boolean algebra, $\algebra{B}$, viewed as an algebra of the signature $(\compo, \bmeet, \A)$, is representable by partial functions. By Stone's representation theorem we may assume that $\algebra{B}$ is a field of sets. Then the set of all identity functions on elements of $\algebra{B}$ forms a representation of $\algebra{B}$ by partial functions. Using the same argument, it is easy to see that a Boolean algebra is completely representable as a field of sets if and only if it is completely representable by partial functions.

Hirsch and Hodkinson used \Cref{thm:hirsch-hodkinson} to identify those Boolean algebras completely representable as fields of sets as precisely the atomic Boolean algebras.\footnote{This result had also been discovered previously by Abian \cite{Abian01051971}.} Hence a Boolean algebra is completely representable by partial functions if and only if it is atomic. The following proposition uses this fact to prove various negative results about the axiomatisability of the class of completely representable $(\compo, \bmeet, \A)$-algebras.

\begin{proposition}\label{closure}
The class of $(\compo, \bmeet, \A)$-algebras that are completely representable by partial functions is not closed with respect to the operations shown in the following table and so is not axiomatisable by first-order theories of the indicated corresponding form.

\hfill \\
\begin{tabular}{r  l  l }

& Operation & Axiomatisation \\[1mm]

(i) & subalgebra & universal \\

(ii) & directed union & universal-existential \\

(iii) & homomorphism & positive \\

\end{tabular}
\end{proposition}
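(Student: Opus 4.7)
The plan is to leverage the observation, noted in the paragraph preceding the proposition, that a Boolean algebra regarded as a $(\compo, \bmeet, \A)$-algebra (with $\compo$ and $\bmeet$ both the Boolean meet and $\A$ the Boolean complement) is completely representable by partial functions if and only if it is atomic. Since the three $(\compo, \bmeet, \A)$-operations are term-definable from the Boolean ones, Boolean subalgebras, directed unions and homomorphisms coincide with $(\compo, \bmeet, \A)$-subalgebras, directed unions and homomorphisms. Together with \Cref{cor:atomic}, it therefore suffices to exhibit Boolean examples in which atomicity fails after each of the three constructions.

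For (i), I would take $\powerset(\omega)$, which is atomic, and embed the free Boolean algebra on $\aleph_0$ generators into it using a standard construction of countably many independent subsets of $\omega$. The free algebra is atomless, so is a non-atomic subalgebra of a completely representable algebra.

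For (ii), I would take a countable atomless Boolean algebra $\algebra{B}$, such as the algebra of finite unions of half-open dyadic subintervals of $[0,1)$, and write it as the directed union of an ascending chain of finite subalgebras (the $n$th being generated by the intervals of denominator $2^n$). Each finite subalgebra is atomic and hence completely representable, but the union $\algebra{B}$ is atomless.

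For (iii), I would use the canonical surjection $\powerset(\omega) \twoheadrightarrow \powerset(\omega)/\mathrm{fin}$, from the atomic algebra $\powerset(\omega)$ onto the atomless algebra $\powerset(\omega)/\mathrm{fin}$.

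The negative axiomatisability claims then follow by standard preservation theorems: universal first-order theories are preserved under substructures, universal-existential theories under directed unions (Chang--\L o\'s--Suszko), and positive theories under surjective homomorphisms. There is no real obstacle here; the argument is essentially a dictionary translating Boolean pathologies into $(\compo, \bmeet, \A)$-pathologies, with the nontrivial content---picking apt Boolean examples and citing the preservation theorems---being standard.
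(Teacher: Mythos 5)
Your proposal is correct and follows essentially the same route as the paper: interpret Boolean algebras as $(\compo, \bmeet, \A)$-algebras, use the fact that such an algebra is completely representable if and only if it is atomic, and then exhibit an atomless subalgebra of a power-set algebra, an atomless directed union of finite (hence atomic) algebras, and the atomless quotient $\powerset(X)/\mathrm{fin}$ of an atomic algebra. The only differences are cosmetic (specific choices of the atomless subalgebra and of the chain, and explicit citation of the preservation theorems, of which only the easy direction is needed).
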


\begin{proof}
In each case we use the fact, which we noted previously, that a Boolean algebra is completely representable by partial functions if and only if it is atomic.
\begin{enumerate}[(i)]
\item
We show that the class is not closed under subalgebras. It follows that the class cannot be axiomatised by any universal first-order theory. Let $\algebra{B}$ be any non-atomic Boolean algebra, for example the countable atomless Boolean algebra, which is unique up to isomorphism. By Stone's representation theorem we may assume that $\algebra{B}$ is a field of sets, with base $X$ say. Then $\algebra{B}$ is a subalgebra of $\powerset(X)$ and $\powerset(X)$ is atomic, but $\algebra{B}$ is not.
\item
We show that the class is not closed under directed unions. It follows that the class cannot be axiomatised by any universal-existential first-order theory. Again, let $\algebra{B}$ be any non-atomic Boolean algebra. Then $\algebra{B}$ is the union of its finitely generated subalgebras, which form a directed set of algebras. The finitely generated subalgebras, being Boolean algebras, are finite and hence atomic. So we have, as required, a directed set of atomic Boolean algebras whose union is not atomic.
\item
We show that the class is not closed under homomorphic images. It follows that the class cannot be axiomatised by any positive first-order theory. Let $X$ be any infinite set and $I$ the ideal of $\powerset(X)$ consisting of finite subsets of $X$. Then $\powerset(X)$ is atomic, but the quotient $\powerset(X) / I$ is atomless and nontrivial and so is not atomic.
\qedhere
\end{enumerate}
\end{proof}

Since we have mentioned the subalgebra and homomorphism operations, we note that the class of completely representable $(\compo, \bmeet, \A)$-algebras \emph{is} closed under direct products. Indeed, it is routine to verify that given complete representations of each factor in a product we can form a complete representation of the product using disjoint unions in the obvious way.

\begin{proposition}\label{axioms}
The class of $(\compo, \bmeet, \A)$-algebras that are completely representable by partial functions is not axiomatisable by any existential-universal-existential first-order theory. 
\end{proposition}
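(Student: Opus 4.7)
The standard model-theoretic criterion here is that every $\exists\forall\exists$-sentence is preserved under $\Sigma_2$-elementary extensions; hence an $\exists\forall\exists$-axiomatisable class must be closed under $\Sigma_2$-elementary extensions. Accordingly, my plan is to exhibit a completely representable algebra $\algebra{A}$ together with a $\Sigma_2$-elementary extension $\algebra{B}$ of $\algebra{A}$ that is not completely representable. Any alleged $\exists\forall\exists$-axiomatisation $T$ would be satisfied by $\algebra{A}$ (since $\algebra{A}$ is completely representable) and hence by $\algebra{B}$; but $\algebra{B}$ is not completely representable, which together with $T$ being an axiomatisation yields a contradiction.

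Using the reduction to Boolean algebras invoked earlier (a Boolean algebra viewed as a $(\compo, \bmeet, \A)$-algebra is completely representable iff it is atomic), the task specialises to producing an atomic Boolean algebra $\algebra{A}$ admitting a $\Sigma_2$-elementary Boolean-algebra extension $\algebra{B}$ that is not atomic. A natural candidate is $\algebra{A}$ = finite and cofinite subsets of $\omega$ and $\algebra{B} = \algebra{A} \times \mathcal{C}$ for a countable atomless Boolean algebra $\mathcal{C}$, with embedding $\iota(a) = (a, f(a))$, where $f \colon \algebra{A} \to \{0_{\mathcal{C}}, 1_{\mathcal{C}}\}$ is the unique Boolean homomorphism sending finite subsets to $0_\mathcal{C}$ and cofinite subsets to $1_\mathcal{C}$. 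Then $\iota$ preserves complementation (since $f$ does) and so is an embedding in the signature $(\compo, \bmeet, \A)$; and $\algebra{B}$ is not atomic, because any element $(0, c)$ with $c > 0$ in $\mathcal{C}$ is nonzero but has no atom below it in $\algebra{B}$.

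The main obstacle is verifying that $\iota$ is in fact $\Sigma_2$-elementary, which is what makes $\Sigma_3$-sentences over $\iota(\algebra{A})$-parameters transfer from $\algebra{A}$ to $\algebra{B}$. I would attack this with a Feferman--Vaught-style decomposition of the product $\algebra{A} \times \mathcal{C}$, showing that any $\Sigma_2$-formula with parameters from $\iota(\algebra{A})$ is controlled by the theories of the two factors and that witnesses in $\algebra{B}$ can always be matched by witnesses in $\iota(\algebra{A})$. Should this direct-product calculation prove too delicate, an alternative is a compactness argument over the $\Sigma_2$-elementary diagram of $\algebra{A}$ augmented with a fresh constant $c$ and axioms asserting that $c$ is nonzero with no atom below it: finite satisfiability can be established by combining a finite subalgebra of $\algebra{A}$ that witnesses the finitely many $\Sigma_2$-facts at hand with an auxiliary atomless piece providing the non-atomic element. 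Once $\Sigma_2$-elementarity of $\iota$ is secured, the conclusion of the proposition follows immediately via \Cref{cor:atomic}.
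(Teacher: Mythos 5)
Your overall skeleton is fine: the preservation fact you invoke is correct (if $\algebra{A}\preceq_{\Sigma_2}\algebra{B}$ then every $\exists\forall\exists$ sentence true in $\algebra{A}$ holds in $\algebra{B}$), the reduction to Boolean algebras via ``completely representable iff atomic'' is the same reduction the paper uses, your map $\iota(a)=(a,f(a))$ is indeed an embedding, and $\algebra{A}\times\mathcal{C}$ is indeed non-atomic. But the proof has a genuine gap exactly where you flag ``the main obstacle'': the claim that $\iota$ is $\Sigma_2$-elementary (or even just that $\Pi_2$ formulas with parameters from $\algebra{A}$ persist upward along $\iota$) is never proved, only deferred to two sketched attacks. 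That claim is the entire mathematical content of the proposition: unwinding it, one must show that for every tuple $\bar y$ in $\algebra{A}\times\mathcal{C}$ and finitely many parameters there is a tuple in $\algebra{A}$ of the same quantifier-free type over those parameters whose existential witnesses can be pulled back to $\algebra{B}$ --- i.e.\ a two-round back-and-forth argument with parameters pinned, which is the same kind of partition-matching combinatorics the paper carries out explicitly in its three-round Ehrenfeucht--Fra\"{\i}ss\'{e} game (and in fact the with-parameters version you need is \emph{stronger} than the parameter-free sentence transfer the paper establishes, since only sentence transfer between one atomic and one non-atomic algebra is actually required).

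Moreover, the compactness fallback as described does not work as stated. The $\Sigma_2$-elementary diagram contains $\Pi_2$ sentences $\forall\bar y\exists\bar z\,\chi(\bar y,\bar z,\bar a)$, and these must hold in the finite-stage model you build; but that model is supposed to contain an atomless piece, so its universal quantifiers range over the new atomless elements, and a finite subalgebra of $\algebra{A}$ does not in general satisfy the same $\Pi_2$ facts as $\algebra{A}$ in the first place (nor does satisfaction in a subalgebra transfer to the amalgam). So ``finite subalgebra plus atomless piece'' simply relocates the transfer problem rather than solving it: you must still prove that adjoining an atomless part preserves the relevant $\forall\exists$ facts, which is precisely the duplicator-strategy argument. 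To repair the proof, either carry out the Feferman--Vaught (or back-and-forth) verification for $\iota$ in detail, or drop parameters altogether and prove directly, by a game or a quantifier-rank analysis of Boolean algebras, that a non-atomic Boolean algebra with infinitely many atoms satisfies every $\exists\forall\exists$ sentence true in an atomic one with infinitely many atoms; the latter is the paper's route, and combined with \Cref{cor:atomic} it finishes the argument.
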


\begin{proof}
Let $\algebra{B}$ be any atomic Boolean algebra with an infinite number of atoms and $\algebra{B'}$ be any Boolean algebra that is not atomic, but that also has an infinite number of atoms. We will show that $\algebra{B'}$ satisfies any existential-universal-existential sentence satisfied by $\algebra{B}$. Since $\algebra{B}$ is completely representable by partial functions and $\algebra{B'}$ is not, this shows that the complete representation class cannot be axiomatised by any existential-universal-existential theory.

We will show that for certain Ehrenfeucht-Fra\"{i}ss\'{e} games, duplicator has a winning strategy. For an overview of Ehrenfeucht-Fra\"{i}ss\'{e} games see, for example, \cite{HodgesW:modt}. Briefly, two players, spoiler and duplicator, take turns to choose elements from two algebras. Duplicator wins if the two sequences of choices determine an isomorphism between the subalgebras generated by all the elements chosen.

Consider the game in which spoiler must in the first round choose $n_1$ elements of $\algebra{B}$, in the second round $n_2$ elements of $\algebra{B'}$ and in the third and final round $n_3$ elements of $\algebra{B}$. Each round, duplicator responds with corresponding choices from the other algebra. Let $\varphi$ be any sentence in prenex normal form whose quantifiers are, starting from the outermost, $n_1$ universals, then $n_2$ existentials and finally $n_3$ universals. It is not hard to convince oneself that if duplicator has a winning strategy for the game then $\algebra{B'} \models \varphi \implies \algebra{B} \models \varphi$. Hence if duplicator has a winning strategy for all games of this form---where spoiler chooses finite numbers of elements from $\algebra{B}$ then $\algebra{B'}$ then $\algebra{B}$---then all universal-existential-universal sentences satisfied by $\algebra{B'}$ are satisfied by $\algebra{B}$. Equivalently, $\algebra{B'}$ satisfies any existential-universal-existential sentence satisfied by $\algebra{B}$, which is what we are aiming to show.

Since our algebras are Boolean algebras, a choice of a finite number of elements from one of the algebras generates a finite subalgebra, with a finite number of atoms. The atoms form a partition, that is, a sequence $(a_1,\ldots,a_n)$ of nonzero elements with $\join_i a_i = 1$ and $a_i \bmeet a_j = 0$ for all $i \neq j$. As the game progresses and more elements are chosen, the partition is refined---the elements of the partition are (finitely) further subdivided. The elements the two players have actually chosen are all uniquely expressible as a join of some subset of the partition.

Suppose that, throughout the game, duplicator is able to maintain a correspondence between the partitions on the two algebras. That is, if spoiler subdivides an element $a$ of the existing partition into $(a_1,\ldots,a_n)$ then the element corresponding to $a$ should be partitioned into a corresponding $(a_1',\ldots,a_n')$. Then clearly this determines a winning sequence of moves for duplicator: each of spoiler's choices is the join of some subset of one partition and duplicator's choice should be the join of the corresponding elements of the other partition. At the end of the game there will exist an isomorphism between the generated subalgebras that sends each element chosen during the game to the corresponding choice from the other algebra. Hence a strategy for maintaining a correspondence between the two partitions provides a winning strategy for duplicator.

For an element $a$ of $\algebra{B}$ or $\algebra{B'}$ we will say that $a$ is of size $n$, for finite $n$, if $a$ is the join of $n$ distinct atoms, otherwise $a$ is of infinite size. Duplicator can maintain a correspondence by playing as follows.

\begin{description}
\item[Round 1] (Spoiler plays on atomic algebra, duplicator on non-atomic) Duplicator should simply provide a partition with matching sizes.

\item[Round 2] (Spoiler non-atomic, duplicator atomic) For subdivisions of elements of finite size, duplicator can provide a subdivision with matching sizes. For subdivisions of elements of infinite size, there is necessarily at least one element in the subdivision of infinite size---duplicator should select one such, match everything else with distinct single atoms and match this infinite size element with what remains on the atomic side.

\item[Round 3] (Spoiler atomic, duplicator non-atomic) At the start of this round every element of the partition of the atomic algebra is matched with something of greater or equal size on the non-atomic side. Hence duplicator can easily provide matching subdivisions.\qedhere
\end{description}
\end{proof}

\section{Distributivity}\label{dist}

We now turn our attention to the validity of various distributive laws with respect to the classes of representable and completely representable $(\compo, \bmeet, \A)$-algebras. We give the first definition that we will use. Other distributive properties that we refer to later are defined similarly. For distributive properties `over meets' it should be assumed that definitions only require that the relevant equation holds when \emph{nonempty} subsets are used.

\begin{definition}
Let $\algebra{P}$ be a poset and $*$ be a binary operation on $\algebra{P}$. We say that $*$ is \defn{completely right-distributive over joins} if, for any subset $S$ of $\algebra{P}$ and any $a \in \algebra{P}$, if $\join S$ exists, then
\[\join S * a = \join(S * \{a\})\text{.}\]
\end{definition}

\begin{proposition}
Let $\algebra{A}$ be an algebra of the signature $(\compo, \bmeet, \A)$ that is representable by partial functions. Then composition is completely right-distributive over joins.
\end{proposition}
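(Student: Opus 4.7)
Write $b = \join S$. By monotonicity of composition (a property of composition of partial functions, inherited by any representable algebra), $b \compo a$ is an upper bound of $S \compo \{a\}$. To show it is the least such, I will produce, for any upper bound $c$ of $S \compo \{a\}$ in $\algebra{A}$, an element $\tilde c \in \algebra{A}$ with $s \leq \tilde c$ for every $s \in S$ and with $\tilde c \compo a \leq c$. Then the defining property of $b = \join S$ forces $b \leq \tilde c$, so that $b \compo a \leq \tilde c \compo a \leq c$, as required.

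The element to take is
\[\tilde c \coloneqq \A\bigl(\D(b \compo a) \bmeet \A(c \bmeet (b \compo a))\bigr) \compo b.\]
Under any representation $\theta$, the leftmost factor represents the diagonal on the set of base-points $x$ at which either $(b \compo a)(x)$ is undefined, or $(b \compo a)(x) = c(x)$; composing with $b$ then restricts $b$ (on the input side) to exactly these ``good'' points. To verify that $s \leq \tilde c$ for each $s \in S$, one takes $(x, y) \in \theta(s)$ and checks that $x$ is good: either $y \notin \D(a)$, in which case $(b \compo a)(x)$ is undefined, or $y \in \D(a)$, in which case $(x, a(y)) \in \theta(s \compo a) \subseteq \theta(c)$ forces $c(x) = a(y) = (b \compo a)(x)$. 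The inequality $\tilde c \compo a \leq c$ is similar: any $(x, z) \in \theta(\tilde c) \compo \theta(a)$ has $z = (b \compo a)(x)$ with $x$ good, so $c(x) = z$, placing $(x,z)$ in $\theta(c)$.

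The main obstacle is simply finding the right definition of $\tilde c$. Once one recognises that the set of ``good'' base-points forms a diagonal element expressible from $a$, $b$, $c$ using only $\D$, $\A$, $\bmeet$, and $\compo$ (so that $\tilde c$ lives in $\algebra{A}$), the remaining verifications are routine set-theoretic checks in the partial-function representation.
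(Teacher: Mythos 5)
Your proof is correct and is essentially the paper's own argument: your $\tilde c$ is precisely the paper's witnessing element $(\D(c \bmeet (\join S \compo a)) \bjoin \A(\join S \compo a)) \compo \join S$ (with your $c$ in the role of the arbitrary upper bound, and the join of sub-identities written via $\A$), namely the restriction of $\join S$ to the points where $\join S \compo a$ is undefined or agrees with the upper bound, and both proofs hinge on showing this element lies above every $s \in S$. The only difference is cosmetic: the paper concludes by composing the resulting identity $\delta \compo \join S = \join S$ with $a$ and discarding the $\A(\join S \compo a)$ part, whereas you conclude from $\join S \leq \tilde c$ together with the direct check $\tilde c \compo a \leq c$.
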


\begin{proof}
As $\algebra{A}$ is representable, we may assume the elements of $\algebra{A}$ are partial functions. Let $S$ be a subset of $\algebra{A}$ such that $\join S$ exists and let $a \in \algebra{A}$.

Firstly, for all $s \in S$ we have $\join S \compo a \geq s \compo a$  and so $\join S \compo a$ is an upper bound for $S \compo \{a\}$.

Now suppose that for all $s \in S$, the element $b \in \algebra{A}$ satisfies $b \geq s \compo a$. For $s \in S$, suppose $s$ is defined on $x$ and let $s(x) = y$. If $a$ is defined on $y$, then $s \compo a$ is defined on $x$, so, since $b \geq s \compo a$ and $\join S \compo a \geq s \compo a$, in this case $b \bmeet (\join S \compo a)$ is defined on $x$. If $a$ is \emph{not} defined on $y$ then, as $(\join S)(x) = y$, in this case $\join S \compo a$ is not defined on $x$. Hence the sub-identity function $\D(b \bmeet (\join S \compo a)) \bjoin \A(\join S \compo a)$ is defined on the entire domain of $s$. Therefore
\[(\D(b \bmeet (\join S \compo a)) \bjoin \A(\join S \compo a)) \compo \join S \geq s\text{.}\]
Since $s$ was an arbitrary element of $S$, we have
\[(\D(b \bmeet (\join S \compo a)) \bjoin \A(\join S \compo a)) \compo \join S \geq \join S\]
and so
\[(\D(b \bmeet (\join S \compo a)) \bjoin \A(\join S \compo a)) \compo \join S = \join S \text{.}\]
Therefore
\begin{align*}
\D(b \bmeet (\join S \compo a)) \compo \join S \compo a &= (\D(b \bmeet (\join S \compo a)) \bjoin \A(\join S \compo a)) \compo \join S \compo a
\\ &= \join S \compo a\text{,}
\end{align*}
which says that wherever the function $\join S \compo a$ is defined, it agrees with the function $b$, that is to say $b \geq \join S \compo a$. So $\join S \compo a$ is the least upper bound for $\join(S \compo \{a\})$.
\end{proof}

\begin{remark}\label{dist-laws}
For $(\compo, \bmeet, \A)$-algebras representable by partial functions it is easy to see that the following two laws hold.
\begin{enumerate}[(i)]
\item\label{law:ljoin}
For finite $S$, if $\join S$ exists, then
\begin{align*}a \compo \join S = \join(\{a\} \compo S)\text{.} && \text{(composition is left-distributive over joins)}\end{align*}
\item\label{law:lmeet}
For finite, nonempty $S$,
\begin{align*}a \compo \meet S = \meet(\{a\} \compo S)\text{.} && \text{(composition is left-distributive over meets)}\end{align*}
\end{enumerate}
\end{remark}

We now give an example that shows that the these distributive laws cannot, in general, be extended to arbitrary joins and meets. We will use this example to show that there exist $(\compo, \bmeet, \A)$-algebras that are representable as partial functions, and atomic, but have no atomic representation.

\begin{example}\label{eg:noncdist}
Consider the following concrete algebra of partial functions, $\algebra{F}$. Its domain is the disjoint union of a one element set, $\{p\}$, and $\mathbb{N}_\infty \coloneqq \mathbb{N} \cup \{\infty\}$. Let $\mathcal{S}$ be all the subsets of $\mathbb{N}_\infty$ that are either finite and do not contain $\infty$, or cofinite and contain $\infty$. The elements of $\algebra{F}$ are precisely the following functions.
\begin{enumerate}
\item Restrictions of the identity to $A \cup B$ where $A \subseteq \{p\}$ and $B \in \mathcal{S}$.

\item The function $f$, defined only on $p$ and taking $p$ to $\infty$.
\end{enumerate}

One can check that $\algebra{F}$ is closed under the operations of intersection, composition and antidomain, that $\algebra{F}$ is atomic and that $f$ is an atom.

For $i \in \mathbb{N}$, let $g_i$ be the restriction of the identity to $\{1,\ldots,i\}$. Then $\join_i g_i$ exists and is equal to the identity restricted to $\mathbb{N}_\infty$. So
\[f \compo \join_{i \in \mathbb{N}} g_i = f \neq \emptyset = \join_{i \in \mathbb{N}} (f \compo g_i)\text{.}\]

For $i \in \mathbb{N}$, let $h_i$ be the restriction of the identity to $\{i,\ldots\} \cup \{\infty\}$. Then $\meet_i h_i$ exists and is equal to the nowhere-defined function. So
\[f \compo \meet_{i \in \mathbb{N}} h_i = \emptyset \neq f = \meet_{i \in \mathbb{N}} (f \compo h_i)\text{.}\]
\end{example}

\begin{lemma}\label{lemma:compdist}
Let $\algebra{A}$ be an algebra of the signature $(\compo, \bmeet, \A)$ that is completely representable by partial functions. Then composition in $\algebra{A}$ is completely left-distributive over joins and completely left-distributive over meets.
\end{lemma}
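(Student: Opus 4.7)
The plan is to leverage the complete representation $\theta$ to translate the algebraic joins and meets of $\algebra{A}$ into set-theoretic unions and intersections of partial functions, where both left-distributivity laws can be checked directly. By Corollaries \ref{cor:1} and \ref{cor:2}, $\theta$ is both join complete and meet complete, so whenever $\join S$ exists we have $\theta(\join S) = \bigcup \theta[S]$, and whenever $\meet S$ exists for nonempty $S$ we have $\theta(\meet S) = \bigcap \theta[S]$. After this translation, the problem reduces to a calculation about composition of concrete partial functions.

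For complete left-distributivity over joins I would use the entirely general set-theoretic fact that composition of partial functions distributes on either side over arbitrary unions, so that $\theta(a) \compo \bigcup \theta[S] = \bigcup_{s \in S} \theta(a \compo s)$. Combined with join-completeness this yields $\theta(a \compo \join S) = \bigcup \theta[\{a\} \compo S]$. From here a standard least-upper-bound verification transported through $\theta$ shows that $\join(\{a\} \compo S)$ exists in $\algebra{A}$ and equals $a \compo \join S$: monotonicity of composition gives the upper-bound half, while for any other upper bound $c$ the inclusion $\theta(c) \supseteq \bigcup \theta[\{a\} \compo S] = \theta(a \compo \join S)$ forces $c \geq a \compo \join S$.

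For complete left-distributivity over meets the strategy is the same, but with one additional ingredient: composition of partial functions distributes over nonempty intersections in the right argument, i.e.\ $f \compo \bigcap_i g_i = \bigcap_i (f \compo g_i)$. This is the one step that genuinely uses partial functionality---if $(x,z) \in \bigcap_i (f \compo g_i)$, then the mediating element $y$ is forced to equal $f(x)$ independently of $i$, so $(y,z) \in \bigcap_i g_i$. Combined with $\theta(\meet S) = \bigcap \theta[S]$ this gives $\theta(a \compo \meet S) = \bigcap \theta[\{a\} \compo S]$, and the greatest-lower-bound verification then proceeds exactly as in the join case. The only real obstacle is this intersection-distributivity step, and it is precisely where completeness is essential: \Cref{eg:noncdist} shows that mere representability does not suffice for either law, so any proof must appeal to the functionality of the concrete elements together with $\theta$ turning algebraic meets and joins into set-theoretic ones---which is what completeness provides.
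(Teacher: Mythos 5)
Your proof is correct and follows essentially the same route as the paper: translate joins and meets through the complete representation, use that composition of relations distributes over arbitrary unions in the right argument, use functionality of the left factor for the corresponding intersection identity, and then transfer the least-upper-bound/greatest-lower-bound verification back through $\theta$. The paper's proof of \Cref{lemma:compdist} is the same argument, down to the observation that only the meet case genuinely needs the concrete elements to be functions rather than relations.
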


\begin{proof}
First we prove that composition is completely left-distributive over joins. Let $S$ be a subset of $\algebra{A}$ such that $\join S$ exists and let $a \in \algebra{A}$. Let $\theta$ be any complete representation of $\algebra{A}$. Suppose that for all $s \in S$ the element $b \in \algebra{A}$ satisfies $b \geq a \compo s$. Then for all $s \in S$ we have $\theta(b) \supseteq \theta(a \compo s)$. Hence
\begin{align*}
\theta(b) &\supseteq \bigcup \theta[\{a\} \compo S] \\
&= \bigcup (\{\theta(a)\} \compo \theta[S]) \\
&= \theta(a) \compo \bigcup \theta[S] \\
&= \theta(a \compo \join S)\text{.} 
\end{align*}
The second equality is a true property of any collection of functions, indeed of any collection of relations. We conclude that $b \geq a \compo \join S$ and hence $a \compo \join S$ is the least upper bound for $\{a\} \compo S$.

The proof that composition is completely left-distributive over meets is similar. Let $S$ be a nonempty subset of $\algebra{A}$ such that $\meet S$ exists and let $a \in \algebra{A}$. Let $\theta$ be any complete representation of $\algebra{A}$. Suppose that for all $s \in S$, the element $b \in \algebra{A}$ satisfies $b \leq a \compo s$. Then for all $s \in S$, we have $\theta(b) \subseteq \theta(a \compo s)$. Hence
\begin{align*}
\theta(b) &\subseteq \bigcap \theta[\{a\} \compo S] \\
&= \bigcap (\{\theta(a)\} \compo \theta[S]) \\
&= \theta(a) \compo \bigcap \theta[S] \\
&= \theta(a \compo \meet S)\text{.} 
\end{align*}
This time the second equality holds only because we are working with functions. It is not, in general, a true property of relations. We conclude from the above that $b \geq a \compo \meet S$ and hence $a \compo \meet S$ is the greatest lower bound for $\{a\} \compo S$.
\end{proof}

\begin{proposition}
There exist $(\compo, \bmeet, \A)$-algebras that are representable by partial functions, and atomic, but have no atomic representation.
\end{proposition}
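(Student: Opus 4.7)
The plan is to use the concrete algebra $\algebra{F}$ constructed in \Cref{eg:noncdist} as the witness. Because $\algebra{F}$ is presented as an algebra of partial functions, the identity map is already a representation of it, and \Cref{eg:noncdist} explicitly records that $\algebra{F}$ is atomic (with $f$ among its atoms). So the only remaining task is to rule out any atomic representation of $\algebra{F}$ whatsoever.

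For this I would argue by contradiction. Suppose that $\algebra{F}$ did admit an atomic representation. By \Cref{prop:at-com}, that representation would automatically be complete, so $\algebra{F}$ would be completely representable. \Cref{lemma:compdist} would then force composition in $\algebra{F}$ itself to be completely left-distributive over joins.

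This is where \Cref{eg:noncdist} delivers the contradiction: inside $\algebra{F}$, with $f$ the atom taking $p$ to $\infty$ and $g_i$ the restriction of the identity to $\{1,\ldots,i\}$, the join $\join_{i \in \mathbb{N}} g_i$ exists and equals the identity on $\mathbb{N}_\infty$, yet
\[f \compo \join_{i \in \mathbb{N}} g_i = f \neq \emptyset = \join_{i \in \mathbb{N}} (f \compo g_i)\text{.}\]
Thus no atomic representation of $\algebra{F}$ can exist. The genuinely hard part of the argument — producing an atomic algebra of partial functions in which composition fails the infinitary left-distributive law over an \emph{existing} join — has already been absorbed into the construction of $\algebra{F}$; once that example is in hand, the proposition follows by a direct appeal to \Cref{prop:at-com} and \Cref{lemma:compdist}.
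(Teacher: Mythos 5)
Your proof is correct and follows essentially the same route as the paper: it takes the algebra $\algebra{F}$ of \Cref{eg:noncdist}, notes it is representable and atomic, and combines \Cref{prop:at-com} with \Cref{lemma:compdist} against the failure of complete left-distributivity over joins. The only cosmetic difference is that you phrase the final step as a contradiction while the paper argues directly (and also cites the failure over meets, which is not needed).
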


\begin{proof}
Let $\algebra{F}$ be the algebra of \Cref{eg:noncdist}. Since $\algebra{F}$ \emph{is} an algebra of partial functions, it is certainly representable by partial functions. We have already mentioned that $\algebra{F}$ is atomic. We have demonstrated that composition in $\algebra{F}$ is neither completely left-distributive over joins nor over meets. Hence, by \Cref{lemma:compdist}, $\algebra{F}$ has no complete representation. So, by \Cref{prop:at-com}, $\algebra{F}$ has no atomic representation.
\end{proof}

To make the discussion of distributive laws comprehensive we finish by mentioning right-distributivity of composition over meets. Here the weakest possible result, that the finite version of the law is valid for completely representable algebras, does not hold for representation by partial functions. In the algebra of partial functions shown in \Cref{picture}, where sub-identity elements are omitted, we have
\[(f_1 \bmeet f_2) \compo g = 0 \compo g = 0 \neq h = h \bmeet h = (f_1 \compo g) \bmeet (f_2 \compo g)\text{.}\]
The algebra is completely representable because it is already an algebra of partial functions and it is finite.

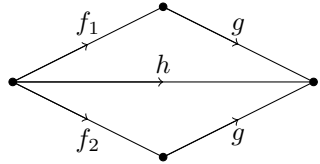
\begin{figure}[H]
\centering
\begin{tikzpicture}
\begin{scope}[->]
\draw (0,0)--(1,0.5);
\draw (0,0)--(1,-0.5);
\draw (0,0)--(2,0);
\draw (2,1)--(3,0.5);
\draw (2,-1)--(3,-0.5);
\end{scope}
\draw (0,0)--(2,1);
\draw (0,0)--(2,-1);
\draw (0,0)--(4,0);
\draw (2,1)--(4,0);
\draw (2,-1)--(4,0);
\draw[fill] (0,0) circle [radius=0.05];
\draw[fill] (4,0) circle [radius=0.05];
\draw[fill] (2,1) circle [radius=0.05];
\draw[fill] (2,-1) circle [radius=0.05];
\node [above] at (1,0.5) {$f_1$};
\node [above] at (3,0.5) {$g$};
\node [below] at (1,-0.5) {$f_2$};
\node [below] at (3,-0.5) {$g$};
\node [above] at (2,0) {$h$};
\end{tikzpicture}
\caption{An algebra refuting right-distributivity over meets}\label{picture}
\end{figure}

\section{A Representation}\label{arep}

We have seen that for an algebra of the signature $(\compo, \bmeet, \A)$ to be completely representable by partial functions it is necessary for it to be representable by partial functions and atomic and for composition to by completely left-distributive over joins. Next we show that these conditions are also sufficient.

\begin{proposition}\label{prop:rep}
Let $\algebra{A}$ be an algebra of the signature $(\compo, \bmeet, \A)$. Suppose $\algebra{A}$ is representable by partial functions and atomic and that composition is completely left-distributive over joins. For each $a \in \algebra{A}$, let $\theta(a)$ be the following partial function on $\At(\algebra{A})$.
\[\theta(a)(x) =
\begin{cases}
x \compo a & \mathrm{if }\text{ }x \compo a \neq 0 \\
\mathrm{undefined} & \mathrm{otherwise}
\end{cases}\]
Then $\theta$ is a complete representation of $\algebra{A}$ by partial functions, with base $\At(\algebra{A})$.
\end{proposition}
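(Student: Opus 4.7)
The plan is to verify in turn that (i) $\theta$ is a well-defined partial-function-valued map on $\At(\algebra{A})$, (ii) $\theta$ preserves $\compo$, $\bmeet$, and $\A$, (iii) $\theta$ is injective, and (iv) $\theta$ is atomic; completeness then follows from \Cref{prop:at-com}. The foundational fact for (i) is that whenever $x$ is an atom of $\algebra{A}$, the element $x \compo a$ is either $0$ or an atom. Supposing $0 < b \leq x \compo a$, the identity $\D(b) \compo y = b$ (valid for $b \leq y$ in any algebra of partial functions, hence in any representable one) gives $\D(b) \compo x \compo a = b$. Since $\D(b) \compo x \leq x$ is below the atom $x$, either $\D(b) \compo x = 0$ (forcing $b = 0$) or $\D(b) \compo x = x$ (giving $b = x \compo a$), so $x \compo a$ is either $0$ or an atom.

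For (ii), preservation of $\compo$ reduces to associativity once one notes that $x \compo a \compo b \neq 0$ forces $x \compo a \neq 0$. Preservation of $\bmeet$ follows from the left-distributive identity $x \compo (a \bmeet b) = (x \compo a) \bmeet (x \compo b)$ of \Cref{dist-laws}, combined with the observation that any nonzero meet of two atoms forces them to be equal. For preservation of $\A$, both $x \compo \D(a)$ and $x \compo \A(a)$ lie below the atom $x$ and have meet $0$; the identity $\A(x \compo \A(a)) \compo x = x \compo \D(a)$ (routine for partial functions) makes them Boolean complements in $\down x$, so exactly one equals $x$ and the other $0$, and the case $x \compo \A(a) = x$ corresponds, via $\A(a) \compo a = 0$, to $x \compo a = 0$, which is precisely when $\A(\theta(a))$ acts as identity at $x$.

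The common ingredient for (iii) and (iv) is that for every $a \in \algebra{A}$, the join $\join \{b \in \At(\algebra{A}) : b \leq a\}$ exists in $\algebra{A}$ and equals $a$. This holds because $\down a$ is an atomic Boolean algebra (by \Cref{lemma:boolean} together with the atomicity of $\algebra{A}$): any upper bound $c$ with $c < a$ would make $\A(c) \compo a$ a nonzero element of $\down a$ containing an atom of $\algebra{A}$ below $a$ but not below $c$, contradicting $c$ being an upper bound. For (iii), we observe that for any atom $b$, the equivalence $b \leq a \iff (\D(b), b) \in \theta(a)$ holds (using $\D(b) \compo a = b$ in one direction and $\D(b) \compo a \leq a$ in the other); so $\theta(a)$ determines the atoms below $a$, which in turn determine $a$ via the join. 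For (iv), given $(x, y) \in \theta(a)$, the assumed complete left-distributivity of composition over joins yields $y = x \compo a = \join \{x \compo b : b \in \At(\algebra{A}),\ b \leq a\}$; since $y$ is a nonzero atom, at least one $x \compo b$ must equal $y$, and that atom $b$ witnesses atomicity of $\theta$ at the pair $(x, y)$.

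The main obstacle will be finding the right witnessing atoms in the base: the slick point is that the sub-identity atom $\D(b)$ serves as an input witness for $b \leq a$ via $\theta(a)(\D(b)) = b$. Once step (i) is in place, the subsequent steps proceed by direct manipulation using the Boolean machinery of \Cref{lemma:boolean} and atomicity of $\algebra{A}$, with the assumption of complete left-distributivity entering only at step (iv).
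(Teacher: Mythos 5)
Your proposal is correct in substance and, for the verification that $\theta$ preserves the operations, follows essentially the paper's route: the same key fact that $x \compo a$ is zero or an atom (your derivation from $\D(b) \compo (x \compo a) = b$ is if anything cleaner than the paper's comparison of domains), the same treatment of composition, the same use of \Cref{dist-laws} for meet, and an equivalent if differently packaged treatment of antidomain (you exhibit $x \compo \A(a)$ and $x \compo \D(a)$ as complements in the two-element Boolean algebra $\down x$, where the paper argues directly on domains of definition). Where you genuinely diverge is at the end: the paper proves completeness directly, establishing both inclusions of $\theta(\join S) = \bigcup \theta[S]$ from complete left-distributivity and atomicity of the value $y$, whereas you show that $\theta$ is an \emph{atomic} representation and then invoke \Cref{prop:at-com}. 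The two finishes are of comparable length; yours is slightly more conceptual and reuses earlier machinery. You also verify injectivity explicitly via the pairs $(\D(b), b)$ and atomisticity, a point the paper's proof passes over in silence, so that step is a welcome addition rather than a redundancy.

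Two assertions in your write-up are used without justification and should be patched, though both repairs are short. First, your injectivity witness needs $\D(b)$ to be an atom of $\algebra{A}$ whenever $b$ is, since otherwise $(\D(b), b)$ is not a pair over the base $\At(\algebra{A})$; this is true but not in the paper, so supply it (assuming the elements are partial functions, any nonzero $c \leq \D(b)$ gives $0 \neq c \compo b \leq b$, hence $c \compo b = b$ by atomicity of $b$, and then $c = \D(c) \geq \D(c \compo b) = \D(b)$, so $c = \D(b)$). Second, your argument that $\join \{ b \in \At(\algebra{A}) \mid b \leq a \}$ exists and equals $a$ only excludes upper bounds $c$ with $c < a$; an arbitrary upper bound $c \in \algebra{A}$ need not be comparable with $a$, so you must first pass to the upper bound $c \bmeet a \leq a$ (exactly as the paper does later in the proof of \Cref{atomistic}) before your complementation argument applies, concluding $c \bmeet a = a$ and hence $c \geq a$. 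With these two lines added, your proof is complete.
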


\begin{proof}
We first need to show that, for each $a \in \algebra{A}$, the partial function $\theta(a)$ maps into $\At(\algebra{A})$. Let $x$ be an atom and suppose that $x \compo a$ is nonzero. Let $b \in \algebra{A}$ and suppose $b \leq x \compo a$. Then $\D(b) \leq \D(x \compo a) \leq \D(x)$. Hence if $\D(b) \compo x = 0$ then $b = 0$. If $\D(b) \compo x > 0$, then we must have $\D(b) \compo x = x$ and hence $\D(b) = \D(x \compo a) = \D(x)$. Therefore $b = x \compo a$. So $x \compo a$ is an atom.

To show that $\theta$ represents composition correctly, let $a, b \in \algebra{A}$ and $x \in \At(\algebra{A})$. Then clearly $\theta(a \compo b)(x) = \theta(a) \compo \theta(b)(x)$ if both sides are defined. The left-hand side is defined precisely when $x \compo a \compo b$ is nonzero and the right-hand side when $x \compo a$ and $x \compo a \compo b$ are both nonzero. Since $x \compo a \compo b \neq 0$ implies $x \compo a \neq 0$, the domains of definition are the same.

To show that $\theta$ represents binary meet correctly, let $a, b \in \algebra{A}$ and $x, y \in \At(\algebra{A})$. Then
\begin{align*}
&(x, y) \in \theta(a \bmeet b) \\
\implies \quad & (x, y) \in \theta(a)\text{ and }(x, y) \in \theta(b) & \text{as }a, b \geq a \bmeet b \\
\implies \quad & (x, y) \in \theta(a) \cap \theta(b) 
\shortintertext{and}
& (x, y) \in \theta(a) \cap \theta(b) \\
\implies \quad & x \compo a = y\text{ and }x \compo b = y \\
\implies \quad & (x \compo a) \bmeet (x \compo b) = y \\
\implies \quad & x \compo (a \bmeet b) = y &\text{ by \Cref{dist-laws}}\\
\implies \quad & (x, y) \in \theta(a \bmeet b)\text{.}
\end{align*}

To show that antidomain is represented correctly, let $a \in \algebra{A}$ and $x \in \At(\algebra{A})$. Then $0 < \theta(\A(a))(x) = x \compo \A(a) \leq x$ if $\theta(\A(a))(x)$ is defined. Since $x$ is an atom we have, in this case, $\theta(\A(a))(x) = x$. The partial function $\A(\theta(a))$ is also a restriction of the identity function. The domains of $\theta(\A(a))$ and $\A(\theta(a))$ are the same, since we have seen that $\theta(\A(a))(x)$ is defined precisely when $x \compo \A(a) = x$, which is when $x \compo a = 0$, which is precisely when $\A(\theta(a))(x)$ is defined. This completes the proof that $\theta$ is a representation of $\algebra{A}$ by partial functions.

Finally, we show that the representation $\theta$ is complete. Let $S$ be a subset of $\algebra{A}$ such that $\join S$ exists. Let $x, y \in \At(\algebra{A})$. Then
\begin{align*}
 & (x, y) \in \bigcup \theta[S] \\
\implies \quad & (x, y) \in \theta(s) &\quad& \text{for some }s \in S \\
\implies \quad & (x, y) \in \theta(\join S) &\quad& \text{as }\join S \geq s
\shortintertext{and}
 & (x, y) \in \theta(\join S) \\
\implies \quad & x \compo \join S = y & \\
\implies \quad & \join(\{x\} \compo S) = y &\quad& \text{as }\compo\text{ is completely left-distributive over joins} \\
\implies \quad & x \compo s = y &\quad& \text{for some }s \in S\text{, since }y\text{ is an atom} \\
\implies \quad & (x, y) \in \theta(s) &\quad& \text{for some }s \in S \\
\implies \quad &  (x, y) \in \bigcup \theta[S]\text{.} 
\end{align*}
Hence $\theta(\join S) = \bigcup \theta[S]$.
\end{proof}

\section{Axiomatising the Class}\label{axiom}

In this final section, we use the conditions for complete representability that we have uncovered to obtain a finite first-order axiomatisation of the complete-representation class.

\begin{definition}
A poset $\algebra{P}$ is \defn{atomistic} if its atoms are join dense in $\algebra{P}$. That is to say that every element of $\algebra{P}$ is the join of the atoms less than or equal to it.
\end{definition}

Clearly any atomistic poset is atomic. For $(\compo, \bmeet, \A)$-algebras representable by partial functions, the converse is also true.

\begin{lemma}\label{atomistic}
Let $\algebra{A}$ be an algebra of the signature $(\compo, \bmeet, \A)$ that is representable by partial functions. If $\algebra{A}$ is atomic, then it is atomistic.
\end{lemma}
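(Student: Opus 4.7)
The plan is to show that for every $a \in \algebra{A}$, setting $B_a \coloneqq \{d \in \At(\algebra{A}) \mid d \leq a\}$, we have $\join B_a = a$ in $\algebra{A}$. Since every $d \in B_a$ satisfies $d \leq a$ by definition, $a$ is an upper bound for $B_a$; the work is to show it is the least one. So let $c \in \algebra{A}$ be an arbitrary upper bound for $B_a$; I must derive $c \geq a$. Observing that $c \bmeet a$ is also an upper bound for $B_a$ and that $c \geq a$ is equivalent to $c \bmeet a = a$, I may replace $c$ by $c \bmeet a$ and thereby assume $c \leq a$ from the outset.

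The key step is to invoke \Cref{lemma:boolean}, which makes $\down a$ into a Boolean algebra with complementation $\compl{b} = \A(b) \compo a$. I claim that $\compl{c}$, the complement of $c$ in $\down a$, equals $0$. If not, then $\compl{c}$ is a nonzero element of $\algebra{A}$, so by atomicity there exists an atom $d \in \At(\algebra{A})$ with $d \leq \compl{c} \leq a$. Then $d \in B_a$, so $d \leq c$ by our choice of $c$; but $d \leq \compl{c}$ forces $d \bmeet c = 0$, whence $d = 0$, contradicting that $d$ is an atom.

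Hence $\compl{c} = 0$, which in the Boolean algebra $\down a$ yields $c = a$, as desired. This is essentially the standard argument that an atomic Boolean algebra with a top element is atomistic (with respect to existing joins); the only real content specific to this setting is the reduction to the Boolean algebra $\down a$, and that is handled entirely by \Cref{lemma:boolean}, so I do not anticipate any serious obstacle.
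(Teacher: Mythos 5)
Your proof is correct and takes essentially the same route as the paper: both reduce to the Boolean algebra $\down a$ supplied by \Cref{lemma:boolean} and handle an arbitrary upper bound $c \in \algebra{A}$ by passing to $c \bmeet a$. The only difference is that you inline the standard argument that an atomic Boolean algebra is atomistic (showing the complement $\compl{c}$ in $\down a$ must be $0$), whereas the paper simply cites that fact as well known.
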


\begin{proof}
Suppose $\algebra{A}$ is atomic and let $a \in \algebra{A}$. By \Cref{lemma:boolean}, the algebra $\down a$ is a Boolean algebra and clearly it is atomic. It is well-known that atomic Boolean algebras are atomistic. So we have
\[
a = \join_{\down a} \{ x \in \At(\down a) \mid x \leq a \} = \join_{\algebra{A}} \{ x \in \At(\down a) \mid x \leq a \} = \join_{\algebra{A}} \{ x \in \At(\algebra{A}) \mid x \leq a \}\text{.}
\]
The second equality holds because any upper bound $c \in \algebra{A}$ for $\{ x \in \At(\down a) \mid x \leq a \}$ is above an upper bound in $\down a$, for example $c \bmeet a$. Hence the least upper bound in $\down a$ is least in $\algebra{A}$ also.
\end{proof}

\begin{lemma}\label{lemma:phi}
Let $\algebra{A}$ be an algebra of the signature $(\compo, \bmeet, \A)$ that is representable by partial functions and atomic. Let $\varphi$ be the first-order sentence asserting that for any $a, b, c$, if $c \geq a \compo x$ for all atoms $x$ less than or equal to $b$, then $c \geq a \compo b$. Then composition is completely left-distributive over joins if and only if $\algebra{A} \models \varphi$.
\end{lemma}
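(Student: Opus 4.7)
My plan is to prove the two implications separately, with the forward direction being a short application of \Cref{atomistic} and the backward direction relying on a sublemma that, for each atom below $\join S$, locates an element of $S$ above it, using the Boolean algebra structure on principal ideals.

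For the ($\Rightarrow$) direction, assume composition is completely left-distributive over joins, and take $a, b, c$ with $c \geq a \compo x$ for every atom $x \leq b$. By \Cref{atomistic} the algebra $\algebra{A}$ is atomistic, so $b = \join\{x \in \At(\algebra{A}) \mid x \leq b\}$. Complete left-distributivity then yields
\[a \compo b = \join\{a \compo x \mid x \in \At(\algebra{A}),\, x \leq b\},\]
and the assumption that $c$ dominates every $a \compo x$ gives $c \geq a \compo b$, so $\varphi$ holds.

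For the ($\Leftarrow$) direction, assume $\algebra{A} \models \varphi$, fix $a \in \algebra{A}$ and $S \subseteq \algebra{A}$ with $\join S$ existing, and write $b = \join S$. Monotonicity of composition (valid because $\algebra{A}$ is representable by partial functions) makes $a \compo b$ an upper bound for $\{a\} \compo S$, so it remains to show it is least. Given any upper bound $c$ of $\{a\} \compo S$, the instance of $\varphi$ applied to $a, b, c$ will give $c \geq a \compo b$ provided I can show $c \geq a \compo x$ for every atom $x \leq b$. I will establish this via the sublemma: \emph{every atom $x \leq \join S$ satisfies $x \leq s$ for some $s \in S$}. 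Granting the sublemma, I pick such an $s$ and use monotonicity to obtain $a \compo x \leq a \compo s \leq c$, as required.

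The sublemma itself is the main obstacle, and I will prove it using the Boolean algebra $\down b$ supplied by \Cref{lemma:boolean}. Suppose for contradiction that $x$ is an atom, $x \leq b$, and $x \not\leq s$ for every $s \in S$. Since $x$ is an atom, $x \not\leq s$ forces $x \bmeet s = 0$, which inside $\down b$ means $s \leq \compl{x}$ (where $\compl{x} = \A(x) \compo b$). Hence $\compl{x}$ is an upper bound for $S$ in $\down b$; since $\join_{\down b} S$ coincides with $\join_{\algebra{A}} S = b$, this gives $b \leq \compl{x}$, whence $x = x \bmeet b \leq x \bmeet \compl{x} = 0$, contradicting that $x$ is an atom. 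The crucial point is that atomicity alone is insufficient---an atom below a join need not lie below any summand in a general atomic poset---so the argument genuinely requires the Boolean structure on $\down b$ rather than a purely order-theoretic appeal to atomisticity.
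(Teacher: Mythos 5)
Your proof is correct and follows essentially the same route as the paper: the forward direction via atomisticity (\Cref{atomistic}), and the backward direction via the same sublemma that an atom below $\join S$ lies below some $s \in S$, proved using the complement in the Boolean algebra $\down \join S$ exactly as in the paper. If anything, your final step (applying $\varphi$ directly to conclude $c \geq a \compo \join S$) states the intended argument more cleanly than the paper's closing remark.
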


\begin{proof}
Suppose first that composition is completely left-distributive over joins. As $\algebra{A}$ is atomic it is atomistic. So for any $a, b \in \algebra{A}$ we have
\[
a \compo b = a \compo \join \{x \in \At(\algebra{A}) \mid x \leq b\} = \join (\{a\} \compo \{x \in \At(\algebra{A}) \mid x \leq b\})
\]
and so $\varphi$ holds.

Now suppose that $\algebra{A} \models \varphi$. Let $a \in \algebra{A}$ and let $S$ be a subset of $\algebra{A}$ such that $\join S$ exists. Then certainly $a \compo \join S$ is an upper bound for $\{a\} \compo S$. To show it is the least upper bound, let $c$ be an arbitrary upper bound for $\{a\} \compo S$. Then
\begin{align*}
&\text{for all }s \in S &\quad& c \geq a \compo s \\
\implies \quad& \text{for all }s \in S\text{ and }x \in \At(\down \join S)\text{ with }x \leq s&& c \geq a \compo x  \\
\implies \quad& \text{for all }x \in \At(\down \join S) && c \geq a \compo x \\
\implies \quad& \text{for all }x \in \At(\algebra{A})\text{ with }x \leq \join S && c \geq a \compo x \\
\implies \quad&&& c \geq a \compo \join S\text{.}
\end{align*}
The third line follows from the second because $x \in \At(\down \join S)$ implies $x \leq s$ for some $s \in S$. To see this, consider the Boolean algebra $\down \join S$. When $x$ is an atom, $x \nleq s$ if and only if $x \bmeet s = 0$, which is equivalent to $\compl{x} \geq s$. So if $x \nleq s$ for all $s \in S$ then $\compl{x} \geq \join S$, forcing $x$ to be zero---a contradiction. The fifth line can be seen to follow from the fourth by first writing $\join S$ as the join of the atoms below it and then using complete left-distributivity.
\end{proof}

We now have everything we need to prove our main result.

\begin{theorem}
The class of $(\compo, \bmeet, \A)$-algebras that are completely representable by partial functions is a basic elementary class.
\end{theorem}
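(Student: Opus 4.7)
The plan is to exhibit a finite set of first-order sentences whose conjunction axiomatises the complete-representation class; since a conjunction of finitely many first-order sentences is a single first-order sentence, this shows the class is basic elementary. The axiomatisation has three components:
\begin{enumerate}[(i)]
\item the finite equational theory provided by \Cref{thm:jackson-stokes}, which axiomatises the class of $(\compo, \bmeet, \A)$-algebras representable by partial functions;
\item an atomicity axiom, expressible as the single first-order sentence
\[\forall a\, \bigl(a \neq 0 \to \exists b\, (b \leq a \wedge b \neq 0 \wedge \forall c\, (c \leq b \wedge c \neq 0 \to c = b))\bigr)\text{;}\]
\item the sentence $\varphi$ of \Cref{lemma:phi}.
\end{enumerate}

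For the forward direction, suppose $\algebra{A}$ is completely representable by partial functions. Then $\algebra{A}$ is representable, so (i) holds. By \Cref{cor:atomic} the algebra is atomic, so (ii) holds. By \Cref{lemma:compdist} composition in $\algebra{A}$ is completely left-distributive over joins, and now \Cref{lemma:phi}, whose hypotheses are satisfied, yields $\algebra{A} \models \varphi$, giving (iii).

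For the backward direction, assume $\algebra{A}$ satisfies (i), (ii) and (iii). From (i) we get that $\algebra{A}$ is representable by partial functions, and from (ii) that it is atomic. \Cref{lemma:phi} then applies, so (iii) tells us that composition in $\algebra{A}$ is completely left-distributive over joins. All three hypotheses of \Cref{prop:rep} are thus met, so $\algebra{A}$ is completely representable by partial functions.

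There is no real obstacle remaining: the hard work has been done in establishing \Cref{cor:atomic}, \Cref{lemma:compdist}, \Cref{prop:rep} and \Cref{lemma:phi}, and the only check needed here is the syntactic one that (ii) is a legitimate first-order sentence, which is immediate.
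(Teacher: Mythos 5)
Your proposal is correct and follows essentially the same route as the paper: characterise complete representability as representability plus atomicity plus complete left-distributivity of composition over joins (via \Cref{cor:atomic}, \Cref{lemma:compdist} and \Cref{prop:rep}), then express each of these by first-order sentences using \Cref{thm:jackson-stokes}, an explicit atomicity sentence, and \Cref{lemma:phi}. The only difference is that you spell out the atomicity sentence and the two implications explicitly, which the paper leaves implicit.
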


\begin{proof}
By \Cref{cor:atomic}, \Cref{lemma:compdist} and \Cref{prop:rep}, an algebra of the signature $(\compo, \bmeet, \A)$ is completely representable by partial functions if and only if it is representable by partial functions, atomic and composition is completely left-distributive over joins. By \Cref{thm:jackson-stokes}, the property of being representable by partial functions is characterised by a finite set of first-order sentences. The property of being atomic is easily written as a first-order sentence. By \Cref{lemma:phi}, in the presence of the axioms for the first two properties, the property that composition is completely left-distributive over joins can be written as a first-order sentence.
\end{proof}

Any attempt at writing down our axioms will readily reveal that each can be expressed in a universal-existential-universal form. We know from \Cref{axioms} that no existential-universal-existential axiomatisation is possible, hence we have determined the precise amount of quantifier alternation necessary to axiomatise the class.

Note that if range had been included in our signature then the function $\theta$ in \Cref{prop:rep} would not be a representation, as it would not represent range correctly. \Cref{fig:range} shows how this can happen. The atom $f$ satisfies $f \compo \R(g) = f$ and so $(f, f) \in \theta(\R(g))$, but there is no $h$ such that $h \compo g = f$ and so $(f, f) \not\in \R(\theta(g))$. Hence questions about the axiomatisability of the complete representation class for the signature $(\compo,  \bmeet, \A, \R)$ remain open. Equally for the less expressive signature $(\compo, \bmeet, \D)$, where the meet-complete and join-complete representations do not coincide.

\begin{figure}[H]
\centering
\begin{tikzpicture}
\draw (0,1)--(2,0);
\draw[->](0,1)--(1,0.5);
\draw[fill] (0,1) circle [radius=0.05];
\node [above] at (1,0.5) {$f$};

\draw (0,-1)--(2,0);
\draw[->](0,-1)--(1,-0.5);
\draw[fill] (0,-1) circle [radius=0.05];
\draw[fill] (2,0) circle [radius=0.05];
\node [below] at (1,-0.5) {$g$};
\end{tikzpicture}
\caption{Algebra for which $\theta$ does not represent range correctly}\label{fig:range}
\end{figure}
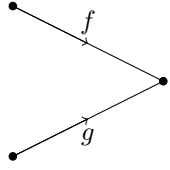

\bibliographystyle{amsplain}

\bibliography{Complete_representation_by_partial_functions}

\end{document}